\title[Closed nodal surfaces]{Closed nodal surfaces for simply connected domains in higher dimensions}
\author{J. B.~Kennedy}
\dedicatory{\upshape
Group of Mathematical Physics, University of Lisbon\\
Av.~Prof.~Gama Pinto 2, 1649-003 Lisboa, Portugal\\[.5em]
\texttt{jkennedy@cii.fc.ul.pt}
}
\newtheorem{theorem}{Theorem}[section]
\newtheorem{lemma}[theorem]{Lemma}
\newtheorem{proposition}[theorem]{Proposition}
\newtheorem{corollary}[theorem]{Corollary}
\theoremstyle{remark}
\newtheorem{remark}[theorem]{Remark}
\newtheorem{definition}[theorem]{Definition}
\numberwithin{equation}{section}
\newcommand{\R}{\mathbb{R}}
\newcommand{\N}{\mathbb{N}}
\DeclareMathOperator{\dist}{dist}
\DeclareMathOperator{\capacity}{cap}
\DeclareMathOperator{\interior}{int}
\newcommand*{\subsubset}{\subset\joinrel\subset}
\begin{document}

\begin{abstract}
	We give an example of a domain in dimension $N \geq 3$, homeomorphic to a ball and  with analytic boundary, for which the 		second eigenvalue of the Dirichlet Laplacian has an eigenfunction with a closed nodal surface. The domain is constructed via a 	sequence of perturbations of the domain of S.~Fournais [J. Differential Equations \textbf{173} (2001), 145--159].
\end{abstract}

\thanks{\emph{Mathematics Subject Classification} (2000). 35P05 (35B05, 35J05, 58J50)}

\thanks{\emph{Key words and phrases}. Laplacian, eigenfunction, nodal domain}

\maketitle

\section{Introduction}
\label{sec:intro}

For $\Omega \subset \R^N$, $N \geq 2$ a bounded domain, that is, a bounded, open, connected set, denote by $\{\lambda_j\}_{j=1}^\infty$ the eigenvalues of the Dirichlet Laplacian on $\Omega$, ordered by increasing size and repeated according to their multiplicites, and by $\{\psi_j\}_{j=1}^\infty$ a set of corresponding eigenfunctions, orthogonal in $L^2(\Omega)$. According to a well-known theorem of Courant, the nodal set $\mathcal{N} = \mathcal{N}(\Omega, j):= \overline{\{x\in\Omega:\psi_j(x)=0\}}$ can divide $\{x \in \Omega: \psi_j(x) \neq 0\}$ into at most $j$ distinct connected components, or nodal domains, of $\Omega$. As second eigenfunction $\psi_2$ must change sign in $\Omega$, it has exactly two nodal domains, $\Omega^+:=\{x\in\Omega: \psi_2(x)>0\}$ and $\Omega^-:= \{x \in \Omega: \psi_2(x)<0\}$. Despite considerable attention, for general domains $\Omega$ not much is known about $\Omega^\pm$, especially as regards their location. Of particular interest for over 40 years has been the  nodal line (or nodal domain) conjecture, which asserts that the nodal set of $\psi_2$ must intersect the boundary of $\Omega$:
\begin{equation}
	\label{eq:0}
	\mathcal{N} \cap \partial \Omega \neq \emptyset
\end{equation}
for any bounded domain $\Omega \subset \R^N$, meaning neither nodal domain can be compactly contained in $\Omega$. For $N=2$ this is usually attributed to Payne; see, e.g., \cite{payne:67}, Conjecture~5, p.~467 or \cite{payne:73}. The same question has been asked by others for higher dimensions; see, e.g., \cite{schoen:94}, Chapter~IX, Problem~45 (or \cite{fournais:01,freitas:08,kawohl:94}). Actually, we remark that Payne does not consider the conjecture to be his; cf.~his statement on p.~721 of \cite{payne:73} that it is of ``uncertain origin".

Various partial results have been established, showing that \eqref{eq:0} holds for certain classes of domains in $\R^2$, usually assuming some convexity and/or symmetry properties; see for example \cite{alessandrini:94,kawohl:94,liboff:05,lin:87,melas:92,payne:73} and the references therein. Perhaps the most general result so far is that \eqref{eq:0} holds for any convex domain in $\R^2$. Less is known in $\R^N$; as yet it remains quite a signifcant open problem to establish \eqref{eq:0} for general convex domains in $\R^N$. So far, this has been done in the special case of thin convex domains \cite{jerison:95}, and domains convex and symmetric with respect to at least two orthogonal directions \cite{damascelli:00}. Possibly the only positive result for non-convex domains in $\R^N$ is for thin curved tubes \cite{freitas:08}.

In the other direction, there is now a considerable body of counterexamples to the nodal domain conjecture in full generality. We will look at a property slightly stronger than the converse to \eqref{eq:0}: we are interested in whether
\begin{equation}
	\label{eq:1}
	{\{x\in\Omega:\psi_2(x) \leq 0\}} \subsubset \Omega,
\end{equation}
for some eigenfunction $\psi_2$ of $\Omega$, where $U \subsubset V$ denotes compact containment, that is, $U \subset K \subset V$ for some compact $K \subset \R^N$. It was shown in \cite{hoffmann:97} that there exists a multiply connected, bounded domain in $\R^2$ for which \eqref{eq:1}  holds. The result was subsequently extended in \cite{fournais:01} to $\R^N$, although using a different method of proof based on Brownian motion techniques rather than symmetry arguments. It has also been shown that \eqref{eq:1} can hold on a ball if one adds a suitable potential \cite{lin:88}, and on a simply connected surface with boundary \cite{freitas:02}. One can also find a simply connected, unbounded planar domain whose nodal set does not touch the boundary (but both nodal domains do) \cite{freitas:07}.

However, there is still a wide gulf between the two sides. One of the major open questions seems to be to determine the minimum possible connectivity amongst all domains $\Omega$ which satisfy \eqref{eq:1}; cf.~\cite{hoffmann:97}, Remark~3, or \cite{freitas:08}, Conjecture~1.2. This is perhaps most interesting in two dimensions, but has also been asked in $\R^N$; again, see \cite{freitas:08}, Conjecture~1.2, or also \cite{fournais:01}, where some effort was also devoted to making sure the constants involved were explicitly estimable. Here we give a complete answer in the higher dimensional case.

\begin{theorem}
\label{th:main}
	Let $N \geq 3$. There exists a bounded, open, non-convex set $\Omega \subset \R^N$ with analytic boundary, which is 		homeomorphic to the open unit ball in $\R^ N$, for which the second eigenvalue $\lambda_2 (\Omega)$ of the Dirichlet 			Laplacian is simple, and for which the corresponding eigenfunction $\psi_2$ satisfies \eqref{eq:1}.
\end{theorem}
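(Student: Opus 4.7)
The strategy is to begin with the multiply connected domain $\Omega_0 \subset \R^N$ of Fournais and to destroy its non-trivial topology by a small perturbation, arguing that for $N \geq 3$ the perturbation is spectrally negligible enough that \eqref{eq:1} is preserved.

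\emph{Setup.} Let $\Omega_0$ be Fournais' (smooth) domain and $\psi_2^{(0)}$ a second eigenfunction with $K := \overline{\{\psi_2^{(0)} \leq 0\}} \subsubset \Omega_0$; after a preliminary $C^\infty$-small perturbation we may assume $\lambda_2(\Omega_0)$ is simple. The obstruction to $\Omega_0$ being homeomorphic to a ball is non-trivial higher homology; for concreteness, assume $\R^N \setminus \overline{\Omega_0}$ has a bounded component (a ``cavity''), the general case requiring several analogous cuts in place of one.

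\emph{The cut.} Choose a smooth embedded arc $\gamma \subset \R^N$ joining the cavity to the unbounded component of $\R^N \setminus \overline{\Omega_0}$, arranged so that $\gamma \cap \Omega_0$ lies in the positive nodal domain $\{\psi_2^{(0)} > 0\}$ and $\dist(\gamma, K) > 0$. Let $T_\epsilon$ be the open $\epsilon$-tubular neighbourhood of $\gamma$ and set $\Omega_\epsilon := \Omega_0 \setminus \overline{T_\epsilon}$. Removing the single transverse ``disk'' $\overline{T_\epsilon} \cap \Omega_0$ kills the separating $(N-1)$-cycle, leaving $\Omega_\epsilon$ contractible; a small analytic smoothing of the ridge where $\partial T_\epsilon$ meets $\partial \Omega_0$ then produces an analytic-boundary domain homeomorphic to the open unit ball.

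\emph{Spectral continuity.} The dimensional hypothesis $N \geq 3$ enters here: a tube of radius $\epsilon$ around a smooth arc in $\R^N$ has capacity $O(\epsilon^{N-3})$ for $N \geq 4$ and $O(1/\log(1/\epsilon))$ for $N = 3$, both vanishing as $\epsilon \to 0$. Hence $\Omega_\epsilon \to \Omega_0$ in the sense of Mosco, and standard domain-perturbation theory yields $\lambda_j(\Omega_\epsilon) \to \lambda_j(\Omega_0)$ for every $j$ (so $\lambda_2(\Omega_\epsilon)$ stays simple for small $\epsilon$) and $\psi_2^{(\epsilon)} \to \psi_2^{(0)}$ in $L^2(\Omega_0)$ after extension by zero. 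Interior Schauder estimates upgrade this to $C^2$-convergence on compact subsets of $\Omega_0 \setminus \gamma$, in particular on a neighbourhood of $\{\psi_2^{(0)} = 0\}$. Consequently the ``small'' nodal component of $\psi_2^{(\epsilon)}$ sits in a Hausdorff neighbourhood of $K$ of radius $o(1)$, hence is at positive distance from $\partial \Omega_\epsilon \subset \partial \Omega_0 \cup \overline{T_\epsilon}$, which is \eqref{eq:1} for $\Omega_\epsilon$.

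\emph{Main obstacle.} The delicate step is the last: ensuring that the closed nodal surface does not migrate or dissolve under the excision of arbitrarily thin filaments. Because $\gamma$ is held at positive distance from $K$, classical interior estimates give uniform control of $\psi_2^{(\epsilon)}$ on a fixed compact annulus containing $\partial K$, which together with the non-degeneracy of $\psi_2^{(0)}$ on its regular nodal surface (generic after the preliminary perturbation) pins the perturbed nodal set inside $O(\epsilon)$-level sets of $\psi_2^{(0)}$. One must also verify that the final analytic smoothing near the junction $\partial \Omega_0 \cap \partial T_\epsilon$ is small in an appropriate operator-theoretic sense; since that smoothing can be made $C^\infty$-small in a tiny neighbourhood of the junction, all of the above estimates survive, and the resulting domain meets every requirement of the theorem.
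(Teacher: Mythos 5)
There is a genuine gap, and it is topological rather than spectral. Your cut-and-excise strategy presupposes that the only obstruction to Fournais' domain being a ball is a ``cavity'', i.e.\ a bounded component of $\R^N\setminus\overline{\Omega_0}$ (nontrivial $H_{N-1}$), which could be opened to the exterior by drilling a thin tube. But for $\Omega_0=B_1\cup A_{1,R}\cup\bigl(\bigcup_i B_\varepsilon(z^i)\bigr)$ one has $\overline{\Omega_0}=\overline{B_R}$, so $\R^N\setminus\overline{\Omega_0}$ has \emph{no} bounded component at all: there is no cavity to cut open. The bounded piece of $\R^N\setminus\Omega_0$ is the codimension-one ``punctured sphere'' $S_1\setminus W_0\subset\partial\Omega_0$, and the actual defect of $\Omega_0$ is its nontrivial fundamental group (loops entering $B_1$ through one room $B_\varepsilon(z^i)$ and leaving through another). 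This defect cannot be removed by deleting material: if a loop is non-contractible in $\Omega_0$ and avoids the thin tube $\overline{T_\epsilon}$ (which it can, since the tube is thin and the rooms are finitely many), then any nullhomotopy in $\Omega_\epsilon\subset\Omega_0$ would already be a nullhomotopy in $\Omega_0$; hence $\Omega_\epsilon$ is never simply connected, let alone contractible. So the central claim ``removing the transverse disk kills the separating cycle, leaving $\Omega_\epsilon$ contractible'' fails for this domain, and ``several analogous cuts'' of the same excising type cannot repair it.

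Consequently your only spectral tool --- smallness of capacity of the removed set --- does not reach the essential step. To trivialise $\pi_1$ one must \emph{add} material: in the paper this is done by attaching thin codimension-one ``sheets'' (radial thickenings of neighbourhoods of a connected set $G\subset S_1$ through the points $z^i$) which join the rooms and cut $S_1\setminus\Omega_0$ into finitely many contractible pieces. Such added sets have positive capacity, so convergence $\widetilde\Omega_m\to\widetilde\Omega$ cannot be argued by capacity; instead one uses outer (monotone) Mosco approximation of a Lipschitz domain, i.e.\ $\interior\bigl(\bigcap_m\widetilde\Omega_m\bigr)=\widetilde\Omega$ together with stability of $\widetilde\Omega$ (Proposition~\ref{prop:3:out}), which is the delicate core of the proof. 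Only \emph{after} this step does the complement acquire genuine solid cavities $U_k$, and only then does your operation --- drilling thin ``fireman's poles'' of vanishing capacity, valid precisely because $N\geq3$ (Proposition~\ref{prop:3:in}) --- apply; a final outer approximation by analytic domains (via \cite{edmunds:87}) replaces your hand-waved ``analytic smoothing of the ridge'', which itself would need a quantitative perturbation statement rather than the phrase ``$C^\infty$-small''. Your convergence-implies-persistence-of-\eqref{eq:1} reasoning (Mosco convergence, locally uniform convergence of $\psi_2$ near $B_1$, hence $\Omega^-_\epsilon\subsubset B_1$) is essentially the paper's Corollary~\ref{cor:3:main} and is fine; the missing idea is the sheet-adding perturbation and its justification without capacity.
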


This is the simplest possible topological structure in $\R^N$ (see Remark~\ref{rem:topology}), meaning topological considerations are eliminated completely when $N \geq 3$.  In fact it would be easy to modify our domain, using the same type of proof, to obtain a domain $\Omega \subset \R^N$, $N \geq 3$, with an essentially arbitrary topological structure, for which \eqref{eq:1} holds (see Remark~\ref{rem:4:others}). Theorem~\ref{th:main} also implies that regularity plays no r\^ole in determining whether \eqref{eq:1} holds; our method of proof allows us to make our domain analytic with a modest amount of extra work. Thus in higher dimensions \eqref{eq:1} may be regarded as a purely \emph{geometric} condition. However, we stress at the outset that our domain and method of proof cannot be extended to $N=2$.

\begin{remark}
\label{rem:topology}
We will refer to the following three topological notions for sets in $\R^N$: (i) being homeomorphic to an (open) ball in $\R^N$, (ii) being contractible and (iii) being simply connected. Following \cite{hatcher:02}, we understand (ii) to mean homotopic to a point and (iii) path-connected and with trivial fundamental group. The latter holds for a bounded domain $\Omega \subset \R^N$, $N \geq 3$ if it is homeomorphic (or homotopic) to a large open ball with a finite collection of points removed from its interior (see \cite{hatcher:02}, Section~1.2, Exercise~3; this fails for $N=1,2$), implying (iii) is strictly weaker than (ii). Moreover, (ii) is strictly weaker than (i), as follows from Theorem~2.26 of \cite{hatcher:02}, or, e.g., Section~1.2, Exercise~20 there. Here, however, all our contractible domains will satisfy a slightly stronger property, which we may take as our definition: for us $U \subset \R^N$ is contractible if it has finite $k$-dimensional measure ($1 \leq k \leq N$), and is homeomorphic to a ball (open, closed or including only part of its boundary) in $\R^k$. By this non-standard definition, for \emph{domains} $\Omega \subset \R^N$, (i) and (ii) are equivalent.
\end{remark}

The idea of the proof of Theorem~\ref{th:main} is to perturb the domain from \cite{fournais:01}, which we will call $\Omega_0$ and describe in Section~\ref{sec:prelim}. The principle is that under topologically unrestrictive conditions on a sequence of domains $\Omega_n$, we can expect the eigenfunctions as well as the eigenvalues associated with $\Omega_n$ to converge to those of $\Omega_0$, and in a sense strong enough to preserve \eqref{eq:1}. Put differently, the property \eqref{eq:1} is robust to perturbations of the domain away from its interior nodal domain. We make this precise and give some conditions from the literature on the $\Omega_n$ in Section~\ref{sec:perturb}. The proof of Theorem~\ref{th:main}, in Section~\ref{sec:proof}, is done by making four successive perturbations. We first perturb $\Omega_0$ to obtain a domain $\widetilde\Omega$ with the same complicated topology but Lipschitz boundary. We then perturb $\widetilde\Omega$ to obtain a simply connected domain $\widehat \Omega$, by (roughly speaking) partitioning $\R^N \setminus \widetilde \Omega$ into contractible pieces (contractible in the sense of Remark~\ref{rem:topology}) using thin ``sheets" added to $\widetilde \Omega$. This is the core of the proof, and also the most delicate part. We then drill thin tubes, or ``fireman's poles" into $\widehat \Omega$ to link the contractible connected components of $\R^N \setminus \widehat\Omega$ in its interior to the outside, obtaining a contractible $\Omega^*$, and then, finally, we approximate $\Omega^*$ from the outside by a sequence of analytic domains to produce our $\Omega$.

\smallskip

\noindent{\textbf{Acknowledgements.}} The author would like to thank Pedro Freitas for suggesting the problem, and for many helpful discussions and comments. This work was supported by grant PTDC/MAT/101007/2008 of the FCT, Portugal.

\section{Notation, background results and the domain of Fournais \cite{fournais:01}}
\label{sec:prelim}

Here we fix our notation and give some preliminary results on the Dirichlet Laplacian and the domain from \cite{fournais:01}. We start with the following basic notation. Depending on which is more convenient, we denote a point $x \in \R^N$ either using Cartesian coordinates  $x=(x_1,\ldots,x_N)$ or polar coordinates $0 \neq x=(r,\theta)$ with $r\in (0,\infty)$ and $\theta \in S_1:= \mathbb S^{N-1}= \{x\in \R^N:|x|=1\}$, the unit sphere in $\R^N$. In a slight abuse of notation we will variously write either $\theta \in S_1$ or $(1,\theta) \in S_1$. We will use $\sigma_k(U)$ to denote the $k$-dimensional (surface) measure of a set $U \subset \R^N$, $k=1, \ldots, N-1$. The complement of a set $U \subset \R^N$ in $\R^N$ is $U^c$, and we write $U \subsubset V$ to mean there exists $K \subset \R^N$ compact with $U \subset K \subset V$. We will denote by $B_r(x)$ a ball in $\R^N$ centred at $x$ and of radius $r$; if $x=0$ we will just write $B_r$. For $0<r<s$ we denote by $A_{r,s}$ the open annular or shell-like region
\begin{equation}
\label{eq:2:annulus}
A_{r,s}:= \{x\in \R^N: r<|x|<s\} = B_s \setminus \overline B_r.
\end{equation}

Given a bounded domain, that is, a bounded, open, connected set $\Omega \subset \R^N$, we understand the Dirichlet Laplacian on $\Omega$, which we will denote by $-\Delta^D_\Omega$, to be the operator on $L^2(\Omega)$ associated with the problem
\begin{equation}
	\label{eq:2:dirichlet}
	\begin{aligned}
	-\Delta u &=f & \quad\quad &\text{in $\Omega$}\\
	u &=0 & &\text{on $\partial \Omega$},
	\end{aligned}
\end{equation}
where $f \in L^2(\Omega)$, and the problem is understood in the usual weak sense. By standard theory, $-\Delta^D_\Omega$ is self-adjoint and has a sequence of eigenvalues $0<\lambda_1<\lambda_2\leq \ldots \to \infty$, where each eigenvalue is repeated according to its (finite) multiplicity. For each $j\geq 1$, we may write $\lambda_j=\lambda_j(\Omega)$ to emphasise the domain with which $\lambda_j$ is associated. The associated eigenfunctions $\{\psi_j\}_{j=1}^\infty$ after a suitable normalisation form an orthonomal basis for $L^2(\Omega)$. For each $j$, $\psi_j \in H^1_0(\Omega) \cap C^\infty(\Omega)$ is in fact analytic in $\Omega$ (see, e.g, \cite{dautray:88}, Section~V.4).

The first eigenvalue $\lambda_1$ is simple, and the associated eigenfunction $\psi_1$ may be chosen strictly positive in $\Omega$; by orthogonality, all the other eigenfunctions change sign in $\Omega$. As previously noted, Courant's nodal domain theorem (see \cite{courant:53}, Section~IV.6) asserts that for each $j$, $\mathcal{N}=\mathcal{N}(\Omega,j) = \overline{\{x \in \Omega: \psi_j(x)=0\}}$ divides $\{x \in \Omega: \psi_j(x) \neq 0\}$ into at most $j$ connected components. When $j=2$ this means $\Omega^+= \{x \in \Omega: \psi_2(x)>0\}$ and $\Omega^-= \{x \in \Omega: \psi_2(x)<0\}$ are connected subsets of $\Omega$. Moreover, it is not hard to show that $\partial \Omega^+ \cap \Omega = \partial \Omega^- \cap \Omega = \{x \in \Omega: \psi_2(x)=0\}$ (combine properties of analytic functions with the maximum principle to show that $\psi_2$ must take both signs in any neighbourhood of a point $x \in \mathcal{N}$).

Let us now describe the domain from \cite{fournais:01}, which we assume to be centred at the origin. We start by choosing $0<R_1<R$ such that $\lambda_1(B_{R_1}) < \lambda_1(A_{R_1, R}) < \lambda_2(B_{R_1})$. To simplify notation, without loss of generality we may assume $R_1=1$. Then the unit sphere $S_1$ is the common boundary between $B_1$ and $A_{1,R}$. For a suitable finite collection of points $z^i \in S_1$, $i=1,\ldots, M$ (we use superscripts to avoid confusion with Cartesian coordinates), there exists $\varepsilon_0>0$ depending on the $z^i$ such that for all $\varepsilon \in (0,\varepsilon_0)$, the domain
\begin{equation}
\label{eq:2:fournais}
	\Omega_{\{z^i\},\varepsilon}:=B_1 \cup A_{1,R} \cup \Bigl(\bigcup_{i=1}^M B_\varepsilon(z^i)\Bigr)
\end{equation}
has one of its nodal domains, say $\Omega_{ \{z^i\}, \varepsilon}^-$, satisying $\Omega_{ \{z^i\}, \varepsilon}^- \subsubset B_1$ (so \eqref{eq:1} holds).

We now choose the $z^i$ and a corresponding $\varepsilon(\{z^i\})>0$, which will be fixed throughout, although in the proof of Theorem~\ref{th:main} we will make a more precise choice of $\varepsilon$, one depending only on the $z^i$. We declare $\Omega_0 := \Omega_{ \{z^i\}, \varepsilon}$ to be our starting point, the domain to be perturbed. In words, $\Omega_0$ may be thought of as an inner ball $B_1$ and an outer thick shell $A_{1,R}$ joined by a large number of small sets $B_\varepsilon(z^i) \cap S_1$, which, considered as open sets in the manifold $S_1$,  we think of as circular, disconnected, $N-1$-dimensional ``rooms". We will set
\begin{equation}
	\label{eq:2:rooms}
	W_0:=\Bigl(\bigcup_{i=1}^M B_\varepsilon(z^i)\Bigr) \cap S_1 = \Omega_0 \cap S_1
\end{equation}
to be this collection of rooms; $S_1$ is partitioned into $W_0$ and its complement, which may be written as $\partial\Omega_0 \cap S_1$, and which is (multiply) connected.

\section{Some basic results on domain perturbation}
\label{sec:perturb}

The key idea of the proof is that we can approximate our given domain $\Omega_0$ by a sequence $\Omega_n$ and, under fairly broad conditions, expect convergence of the eigenfunctions in a reasonable sense. In this section, we will make this idea precise and introduce the general results on convergence we will use to prove Theorem~\ref{th:main}. We first observe that for any domain $\Omega \subset \R^N$, we may continuously embed $H^1_0(\Omega)$ into $H^1(\R^N)$ by extending functions $u \in H^1_0(\Omega)$ by $0$ outside $\Omega$. In this sense $H^1(\R^N)$ and $L^2(\R^N)$ are the natural spaces in which to consider sequences of functions defined on sequences of domains. So from now on we will regard elements of $H^1_0(\,.\,)$ as lying in $H^1(\R^N)$ (and similarly $L^2(\R^N)$) without further comment. 

\begin{definition}
	We will write $\Omega_n \to \Omega$ if the following two conditions are satisfied:
	\begin{itemize}
	\item[(i)] the weak limit points of every sequence $u_n \in H^1_0 (\Omega_n)$ in $H^1(\R^N)$ lie in $H^1_0(\Omega)$;
	\item[(ii)] for all $v \in H^1_0(\Omega)$ there exist $v_n \in H^1_0 (\Omega_n)$ with $v_n \to v$ in $H^1(\R^N)$.
	\end{itemize}
\end{definition}

In such a case it is often said that $\Omega_n \to \Omega$ in the sense of Mosco (as in \cite{daners:03}), since (i) and (ii) were probably first used in \cite{mosco:69}.

\begin{theorem}
\label{th:3:conv}
Suppose $\Omega_n,\Omega$ are bounded open sets in $\R^N$, such that $\Omega_n,\Omega \subset B_\rho$ for some fixed $\rho>0$ sufficiently large, and suppose that $\Omega_n \to \Omega$. If $\lambda_j$ is a simple eigenvalue of $\Delta^D_\Omega$, then the corresponding eigenvalue $\lambda_{j,n}$ of $\Delta^D_{\Omega_n}$ is simple for all $n \geq 1$ sufficiently large, $\lambda_{j,n} \to \lambda_j$ as $n \to \infty$, and after a suitable normalisation the corresponding eigenfunctions $\psi_{j,n} \to \psi_j$ in $L^2(\R^N)$.
\end{theorem}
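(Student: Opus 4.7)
The plan is to combine the Courant--Fischer min-max characterization of the Dirichlet eigenvalues with the two Mosco conditions in a standard two-sided bound. For the upper bound I would take $L^2$-orthonormal eigenfunctions $\psi_1,\ldots,\psi_j$ of $\Omega$ and apply (ii) to each to produce approximants $\psi_{k,n} \in H^1_0(\Omega_n)$ with $\psi_{k,n} \to \psi_k$ in $H^1(\R^N)$. For $n$ large the $\psi_{k,n}$ remain linearly independent and both their $L^2$ Gram matrices and their Dirichlet forms converge to the corresponding matrices for $\psi_1,\ldots,\psi_j$, so the min-max applied to the span of the $\psi_{k,n}$ yields $\limsup_n \lambda_{k,n} \leq \lambda_k(\Omega)$ for every $k \leq j$.

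For the lower bound, I would fix an $L^2$-orthonormal system of eigenfunctions $\varphi_{k,n}$ of $-\Delta^D_{\Omega_n}$ corresponding to $\lambda_{k,n}$, $k=1,\ldots,j$. By the upper bound, $\|\nabla \varphi_{k,n}\|_{L^2(\R^N)}^2 = \lambda_{k,n}$ is bounded, and since $\Omega_n \subset B_\rho$, the Rellich--Kondrachov theorem yields, after passing to a diagonal subsequence, weak $H^1(\R^N)$ limits $\varphi_k^*$ with strong $L^2$ convergence. Condition (i) places each $\varphi_k^*$ in $H^1_0(\Omega)$, and strong $L^2$ convergence preserves orthonormality, so the $\varphi_k^*$ are linearly independent. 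To pass to the limit in the weak equation $\int \nabla \varphi_{k,n}\cdot \nabla v_n = \lambda_{k,n}\int \varphi_{k,n} v_n$, I would use (ii) to produce, for each fixed $v \in H^1_0(\Omega)$, a recovery sequence $v_n \in H^1_0(\Omega_n)$ with $v_n \to v$ strongly in $H^1(\R^N)$; pairing strong convergence of $v_n$ with the weak convergence of $\varphi_{k,n}$ identifies $\varphi_k^*$ as an eigenfunction of $-\Delta^D_\Omega$ at eigenvalue $\lambda_k^* := \lim_n \lambda_{k,n}$ along the subsequence.

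Having produced $j$ orthonormal eigenfunctions of $\Omega$ with eigenvalues $\lambda_k^* \leq \lambda_k(\Omega)$, the min-max principle on $\Omega$ forces $\lambda_k^* = \lambda_k(\Omega)$ for $k=1,\ldots,j$; since this holds for every subsequence, the full sequence $\lambda_{k,n}$ converges to $\lambda_k(\Omega)$ for each $k$. If $\lambda_j(\Omega)$ is simple we have strict inequalities $\lambda_{j-1}(\Omega) < \lambda_j(\Omega) < \lambda_{j+1}(\Omega)$, and the scalar convergence just established forces $\lambda_{j-1,n} < \lambda_{j,n} < \lambda_{j+1,n}$ for all large $n$, so $\lambda_{j,n}$ is simple. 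The limit $\varphi_j^*$ must then be a scalar multiple of the (up to sign unique) normalized eigenfunction $\psi_j$, and an appropriate choice of sign for $\psi_{j,n}$ gives $\psi_{j,n} \to \psi_j$ in $L^2(\R^N)$. I expect the main delicacy to be the limit passage in the weak eigenvalue equation on a \emph{variable} test space, which is precisely where the recovery property (ii) is indispensable; the uniformity of the Rellich compactness, which underpins the extraction of the limits $\varphi_k^*$, is the reason the fixed enclosing ball $B_\rho$ appears in the hypotheses.
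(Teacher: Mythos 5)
Your proof is correct but takes a genuinely different route from the paper. The paper proves this result in two lines by citing two abstract results of Daners \cite{daners:03}: Corollary~4.7 there shows that Mosco convergence implies convergence of the associated resolvent operators in operator norm on $L^2(B_\rho)$ (after extending by zero), and Corollary~4.2 shows that norm resolvent convergence implies convergence of any finite part of the spectrum together with the corresponding spectral projections, which simultaneously gives simplicity, eigenvalue convergence, and eigenfunction convergence. You instead give a self-contained, elementary proof in the classical style: the upper bound via Mosco condition (ii) and min-max on the approximating spans; the lower bound via uniform Rellich compactness on $B_\rho$, Mosco condition (i) to locate the limits in $H^1_0(\Omega)$, and a limit passage in the weak eigenvalue equation using recovery sequences to handle the variable test space; then identification by min-max and a subsequence argument. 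Your approach has the advantage of transparency and avoids reliance on the abstract resolvent machinery; the paper's has the advantage of brevity and of packaging eigenfunction and multiplicity control uniformly through spectral projections. One small point worth making explicit: the inequality $\lambda_{j,n}<\lambda_{j+1,n}$ for large $n$ requires that $\lambda_{j+1,n}\to\lambda_{j+1}(\Omega)$, which is not part of the ``scalar convergence just established'' for $k\leq j$; you should run the upper/lower bound argument with $j+1$ eigenfunctions (the upper bound already gives boundedness of $\lambda_{j+1,n}$, so the compactness extraction extends without change). Also, your weak-$H^1$ extraction together with the convergence of Dirichlet energies $\|\nabla\psi_{j,n}\|_{L^2}^2=\lambda_{j,n}\to\lambda_j$ in fact upgrades the conclusion to strong $H^1(\R^N)$ convergence, which is stronger than the $L^2$ convergence asserted in the statement, though the latter is all that is needed downstream.
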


\begin{proof}
Combine \cite{daners:03}, Corollary~4.7, which implies convergence of the corresponding resolvent operators, with Corollary~4.2, which then implies convergence of any finite part of the spectrum as well as the corresponding spectral projection in norm. 
\end{proof}

The abstract result on spectral convergence is really due to Kato (see Section~IV.3 of \cite{kato:76}), but needs the modifications of \cite{daners:03} to be applicable to this case.

We strengthen Theorem~\ref{th:3:conv} using standard estimates for solutions to elliptic equations to obtain local convergence in the supremum norm, not the strongest possible result but obviously sufficient for our purposes. Notationally, for the rest of the paper we will drop the subscript $j$ from our eigenvalues, functions and nodal sets, there being no danger of confusion as we are only interested in the case $j=2$.

\begin{theorem}
\label{th:3:cont}
Suppose $\Omega_n \to \Omega$ satisfy the conditions of Theorem~\ref{th:3:conv}, and that $\lambda_n \to \lambda$ are corresponding simple eigenvalues, with eigenfunctions $\psi_n \to \psi$ in $L^2(\R^N)$. Suppose also that there exists
\begin{displaymath}
	\Omega'\subsubset \Omega \cap \Bigl(\bigcap_{n \geq 1}\Omega_n \Bigr).
\end{displaymath}
Then $\psi_n \to \psi$ in $C(\overline{\Omega'})$.
\end{theorem}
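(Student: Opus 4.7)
The plan is to upgrade the $L^2(\R^N)$-convergence $\psi_n \to \psi$ supplied by Theorem~\ref{th:3:conv} to uniform convergence on $\overline{\Omega'}$ by invoking interior elliptic regularity. Since $\Omega' \subsubset \Omega \cap \bigcap_{n\geq 1} \Omega_n$, there is a positive distance separating $\overline{\Omega'}$ from the complement of this intersection, so one can sandwich $\Omega' \subsubset \Omega'' \subsubset \Omega \cap \bigcap_{n\geq 1} \Omega_n$ for some open $\Omega''$. Each $\psi_n$ satisfies $-\Delta \psi_n = \lambda_n \psi_n$ weakly in $\Omega_n \supset \Omega''$, with $\lambda_n \to \lambda$ hence $\sup_n \lambda_n < \infty$, and $\|\psi_n\|_{L^2(\Omega'')} \leq \|\psi_n\|_{L^2(\R^N)}$ uniformly bounded in $n$ (since $L^2$-convergent sequences are norm-bounded).

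Next, I would apply interior elliptic regularity on a finite chain of nested open sets $\Omega' \subsubset U_1 \subsubset U_2 \subsubset \ldots \subsubset U_m \subsubset \Omega''$, bootstrapping via standard interior $H^k$-estimates for the Laplacian (with right-hand side $\lambda_n \psi_n$ itself in $H^{k-2}$ after the previous step). This yields, for any fixed $k \in \N$, a uniform bound
\begin{equation*}
\|\psi_n\|_{H^k(\Omega')} \leq C_k \bigl(1+\lambda_n\bigr)^{k/2} \|\psi_n\|_{L^2(\Omega'')} \leq C'_k,
\end{equation*}
with constants depending only on $k$, the geometry of the nested sets, and $\sup_n \lambda_n$, but not on $n$. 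Choosing $k$ large enough that $H^k(\Omega') \hookrightarrow C^{0,\alpha}(\overline{\Omega'})$ via the Sobolev embedding (any $k > N/2$ suffices for some $\alpha \in (0,1)$), we obtain a uniform Hölder bound for the $\psi_n$ over $\overline{\Omega'}$.

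By the Arzel\`a--Ascoli theorem, $\{\psi_n\}$ is then precompact in $C(\overline{\Omega'})$. Any subsequential uniform limit must agree almost everywhere with the $L^2$-limit $\psi$, and since both are continuous on $\overline{\Omega'}$ (the function $\psi$ being smooth on $\Omega \supset \overline{\Omega'}$), they agree pointwise on $\overline{\Omega'}$. A standard subsequence-of-subsequence argument then forces the full sequence $\psi_n$ to converge to $\psi$ in $C(\overline{\Omega'})$.

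There is no real obstacle in this argument; the only matter requiring care is that the constants in the interior estimates be uniform in $n$, which holds because the only $n$-dependent quantities entering those estimates are the $\lambda_n$ (bounded by convergence) and the distances between the nested sets (fixed by the compact containment $\Omega' \subsubset \Omega \cap \bigcap_n \Omega_n$).
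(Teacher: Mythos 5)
Your proof is correct, but it follows a genuinely different route from the paper's. You obtain uniform (in $n$) interior $H^k$-bounds on the $\psi_n$ via bootstrapping, then pass to $C^{0,\alpha}(\overline{\Omega'})$ through Sobolev embedding, invoke Arzel\`a--Ascoli for precompactness in $C(\overline{\Omega'})$, and finally identify the subsequential uniform limit with $\psi$ using the $L^2$-convergence. The paper instead applies a De~Giorgi--Nash--Moser type estimate (Theorem~8.24 of Gilbarg--Trudinger) \emph{directly to the difference} $\psi - \psi_n$, which satisfies $(\Delta+\lambda)(\psi-\psi_n)=(\lambda-\lambda_n)\psi_n$: this yields $\|\psi-\psi_n\|_{C^\alpha(\overline{\Omega'})} \leq C\bigl(\|\psi-\psi_n\|_{L^2} + |\lambda-\lambda_n|\,\|\psi_n\|_{L^q}\bigr)$, and the right-hand side is driven to zero using the hypotheses plus a uniform $L^\infty$-bound on $\psi_n$ (Theorem~8.15 of Gilbarg--Trudinger). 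The paper's route is more direct in that it gives convergence of the full sequence without a compactness/subsequence extraction; your route is somewhat more robust and self-contained (it only needs the standard interior $H^k$-bootstrap, not the $L^p$-data Hölder estimate), at the cost of the extra Arzel\`a--Ascoli step and the need to note implicitly that the intermediate chain can be chosen with enough regularity for the Sobolev embedding $H^k \hookrightarrow C^{0,\alpha}$ to apply up to the closure. Both arguments are valid and both rest on the same key point: the uniform gap $\dist(\Omega', \partial\Omega_n) \geq d^* > 0$ makes all interior constants uniform in $n$.
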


\begin{proof}
By assumption, $d^*:=\min\{\inf_{n\geq 1}\dist(\Omega',\partial\Omega_n),\dist(\Omega',\partial\Omega)\}>0$. Fix $d \in (0,d^*)$. We will apply Theorem~8.24 of \cite{gilbarg:83} to the equation
\begin{displaymath}
	(\Delta+\lambda)(\psi-\psi_n)=(\lambda-\lambda_n)\psi_n
\end{displaymath}
on $\Omega'\subsubset \Omega'_d:=\{x \in \R^N: \dist(x,\Omega')<d\}$. Denote by $\Lambda \geq \sup\{\lambda, \lambda_n\}$  any fixed upper bound and also fix some $q>N/2$. Then there exist constants $C=C(N,d,\Lambda,q)>0$ and $\alpha=\alpha(N,\Lambda d)>0$ such that
\begin{equation}
	\label{eq:3:cont}
	\|\psi-\psi_n\|_{C^\alpha(\overline{\Omega'})}\leq C(\|\psi-\psi_n\|_{L^2(\Omega'_d)}
	+\|(\lambda-\lambda_n)\psi_n\|_{L^q(\Omega'_d)}).
\end{equation}
Since $\|\psi-\psi_n\|_{L^2(\Omega'_d)} \leq \|\psi-\psi_n\|_{L^2(\R^N)} \to 0$ and $\lambda_n \to \lambda$, we are done if we can show $\|\psi_n\|_{L^q(\Omega'_d)}$ remains bounded in $n$. We estimate it by the $L^2$-norm as follows:
\begin{displaymath}
\begin{split}
	\|\psi_n\|_{L^q(\Omega'_d)} \leq \|\psi_n\|_{L^q(\Omega_n)}&\leq C_1(N,|\Omega_n|)\|\psi_n\|_{L^\infty(\Omega_n)}\\
	&\leq C_1(N,|\Omega_n|) C_2(N,\Lambda,|\Omega_n|)\|\psi_n\|_{L^2(\Omega_n)},
\end{split}
\end{displaymath}
where the last inequality follows from Theorem~8.15 of \cite{gilbarg:83} applied to $(\Delta+\lambda_n)\psi_n=0$ on $\Omega_n$, $\psi_n = 0$ on $\partial \Omega_n$, and $\Lambda$ is as before. Since $\psi_n\to\psi$ in $L^2(\R^N)$, the $L^2$-norm of $\psi_n$ certainly stays bounded as $n\to\infty$. Moreover, since $|\Omega'| \leq |\Omega_n| \leq |B_\rho|$ for some large fixed $\rho>0$, $C_1$ and $C_2$ are bounded in $n$ and so the right hand side of \eqref{eq:3:cont} must indeed converge to 0 as $n\to\infty$.
\end{proof}

\begin{corollary}
\label{cor:3:main}
Suppose under the conditions and notation of Theorem~\ref{th:3:cont} that $\lambda$, $\lambda_n$ are the second eigenvalues of their respective domains, and that $\Omega^- = \{x\in \Omega:\psi(x)<0\}$ satisfies $\Omega^- \subsubset \Omega'$. Then for all $n \geq 1$ sufficiently large, the corresponding nodal domain $\Omega_n^- = \{x \in \Omega_n: \psi_n(x)<0\}$ also satisfies $\Omega_n^- \subsubset \Omega'$.
\end{corollary}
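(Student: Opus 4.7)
The plan is to interpose an intermediate open set between $\overline{\Omega^-}$ and $\Omega'$, show that $\psi_n$ is uniformly positive on the resulting ``barrier'' region for $n$ large, and then use the connectedness of the second nodal domain $\Omega_n^-$ (Courant) to trap it inside $\Omega'$.

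First, since $\overline{\Omega^-}$ is compact and sits inside the open set $\Omega'$ by hypothesis, I can choose an open set $\Omega''$ with
\begin{displaymath}
\overline{\Omega^-} \subset \Omega'' \subsubset \Omega' \subsubset \Omega,
\end{displaymath}
e.g.\ $\Omega'' := \{x \in \R^N : \dist(x, \overline{\Omega^-}) < \eta\}$ for $\eta>0$ small enough. Set $K := \overline{\Omega'} \setminus \Omega''$, which is a compact subset of $\Omega$. On $K$ one has $\psi \geq 0$, since $\Omega^- \subset \Omega''$; moreover $K$ is disjoint from the nodal set, because $\mathcal{N} \cap \Omega = \partial\Omega^- \cap \Omega \subset \overline{\Omega^-} \subset \Omega''$. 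Hence $\psi>0$ pointwise on $K$, and continuity together with compactness yields a uniform lower bound $\psi \geq \delta > 0$ on $K$ for some $\delta>0$.

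Next, I would apply Theorem~\ref{th:3:cont} to get $\psi_n \to \psi$ uniformly on $\overline{\Omega'}$, so that for all sufficiently large $n$, $\psi_n \geq \delta/2$ on $K$; equivalently,
\begin{displaymath}
\Omega_n^- \cap \overline{\Omega'} \subset \Omega''.
\end{displaymath}
Fixing any $x_0 \in \Omega^- \subset \Omega''$, the same uniform convergence gives $\psi_n(x_0)<0$ for large $n$, so $x_0 \in \Omega_n^- \cap \Omega''$; in particular $\Omega_n^-$ meets $\Omega''$.

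Finally, by Courant's theorem the second nodal domain $\Omega_n^-$ is open and connected, hence path-connected. If there were a point $y \in \Omega_n^- \setminus \Omega'$, a continuous path $\gamma \subset \Omega_n^-$ from $x_0$ to $y$ would satisfy $\gamma(0) \in \Omega'$ and $\gamma(1) \notin \Omega'$; setting $t^* := \sup\{t : \gamma([0,t]) \subset \Omega'\}$, continuity together with openness of $\Omega'$ forces $\gamma(t^*) \in \partial\Omega' \subset K$, contradicting $\Omega_n^- \cap K = \emptyset$. Therefore $\Omega_n^- \subset \Omega''$, and since $\overline{\Omega''}$ is compact and contained in $\Omega'$, we conclude $\Omega_n^- \subsubset \Omega'$. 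The main obstacle is precisely this last step: uniform convergence on $\overline{\Omega'}$ yields no information about $\psi_n$ off $\Omega'$, so it is crucial to invoke Courant's connectedness of the second nodal domain to exclude the possibility of a stray negative component of $\{\psi_n<0\}$ detaching from $\Omega^-$ and escaping outside $\Omega'$.
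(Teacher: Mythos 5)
Your proof is correct. The paper states Corollary~\ref{cor:3:main} without proof, and your argument supplies exactly the intended missing details: you interpose an intermediate set $\Omega''$ so that the compact ``collar'' $K = \overline{\Omega'}\setminus\Omega''$ is a positive distance from the nodal set of $\psi$, obtain a uniform lower bound for $\psi$ there, transfer it to $\psi_n$ via the $C(\overline{\Omega'})$-convergence of Theorem~\ref{th:3:cont}, and then invoke connectedness of the second nodal domain $\Omega_n^-$ (Courant) to rule out any component of $\{\psi_n<0\}$ escaping through $K$. Every step checks out, including the identification $\{x\in\Omega:\psi(x)=0\}=\partial\Omega^-\cap\Omega\subset\overline{\Omega^-}\subset\Omega''$ (which the paper records in Section~\ref{sec:prelim}) and the observation that $\partial\Omega'\subset K$ because $\Omega''\subsubset\Omega'$.
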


Finally for this section, we will state two relatively simple results which we will use to verify the condition $\Omega_n \to \Omega$. The first involves approximation of sets from the outside and the second from the inside. We will use $\capacity(E)$ to denote the $H^1$-capacity of a set $E$, given (as is standard) by
\begin{displaymath}
	\begin{split}
	\capacity(E):=\inf\{\|\varphi\|_{H^1}^2 &: \varphi \in H^1_0(\R^N) \text{ and $\varphi\geq 1$ in an}\\
	&\text{open neighbourhood of $E$} \}
	\end{split}
\end{displaymath}
(cf., e.g., Section~2.35 of \cite{heinonen:93} or Section~2 of \cite{rauch:75}).

\begin{proposition}
\label{prop:3:out}
Suppose $\Omega_n, \Omega \subset \R^N$ are open, $\Omega_n \supset \Omega_{n+1} \supset \Omega$ for all $n \geq 1$, and $\Omega = \interior \bigcap_{n\in\N}\Omega_n$. If $\Omega$ is Lipschitz, then $\Omega_n \to \Omega$.
\end{proposition}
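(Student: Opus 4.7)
The plan is to verify the two clauses of Mosco convergence directly, combining weak-closedness arguments with the Lipschitz regularity of $\Omega$. Clause (ii) is essentially free: since $\Omega \subset \Omega_n$ for every $n$, any $v \in H^1_0(\Omega)$ (extended by zero to $\R^N$) already lies in $H^1_0(\Omega_n)$, so we simply take $v_n \equiv v$.

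For clause (i), suppose $u_n \in H^1_0(\Omega_n)$ and $u_{n_k} \rightharpoonup u$ weakly in $H^1(\R^N)$ along a subsequence. The crucial structural feature is that $\{H^1_0(\Omega_n)\}$ is a \emph{decreasing} sequence of norm-closed (hence weakly closed) subspaces of $H^1(\R^N)$: since $\Omega_m \subset \Omega_n$ for $m \geq n$, the tails $\{u_m\}_{m \geq n}$ all lie in $H^1_0(\Omega_n)$, and so therefore does the weak limit $u$. Hence $u \in \bigcap_n H^1_0(\Omega_n)$, which immediately gives $u = 0$ a.e.\ on each $\R^N \setminus \Omega_n$ and, taking unions, on $\R^N \setminus \tilde\Omega$, where $\tilde\Omega := \bigcap_n \Omega_n \supset \Omega$. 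Because $\Omega$ is Lipschitz, the familiar characterization $H^1_0(\Omega) = \{w \in H^1(\R^N) : w = 0 \text{ a.e.\ on } \R^N \setminus \Omega\}$ then reduces everything to showing that $u$ also vanishes on the residual set $\tilde\Omega \setminus \Omega$.

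This last step is the main obstacle. The geometric ingredient is the identity $\R^N \setminus \Omega = \R^N \setminus \interior \tilde\Omega = \overline{\R^N \setminus \tilde\Omega}$, which rephrases the hypothesis $\Omega = \interior \tilde\Omega$ as the statement that $\R^N \setminus \tilde\Omega$ is dense in $\R^N \setminus \Omega$. To turn this density into the required vanishing, I would pass to a quasi-continuous representative $u^*$ of $u$: the capacitary characterization of $H^1_0$ upgrades each a.e.\ vanishing $u = 0$ on $\R^N \setminus \Omega_n$ to a quasi-everywhere vanishing $u^* = 0$ there, and hence on the dense set $\R^N \setminus \tilde\Omega$; the continuity of $u^*$ off sets of arbitrarily small $H^1$-capacity then propagates the vanishing to the whole closure $\R^N \setminus \Omega$. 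Care is needed because $\tilde\Omega \setminus \Omega$ can in principle carry positive Lebesgue measure or $H^1$-capacity---for instance when the outer components of $\Omega_n$ shrink onto a fat Cantor-type set disjoint from $\overline\Omega$---so a bare Lebesgue-density argument will not do; it is precisely the quasi-continuity of $H^1$ functions, together with the stability of $H^1_0$ under Lipschitz perturbations, that closes the proof.
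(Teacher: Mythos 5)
Your clause (ii) is indeed immediate, and the first half of your clause (i) is also sound: since the $\Omega_n$ decrease, every weak limit point $u$ lies in $\bigcap_n H^1_0(\Omega_n)$, hence its quasi-continuous representative $u^*$ vanishes quasi-everywhere on $\R^N\setminus\widetilde\Omega$, where $\widetilde\Omega:=\bigcap_n\Omega_n$; and for Lipschitz $\Omega$ the identification $H^1_0(\Omega)=\{w\in H^1(\R^N):w=0 \text{ a.e.\ on }\R^N\setminus\Omega\}$ is legitimate. The gap is the final step, and it is not a technicality but the entire content of the proposition. Quasi-continuity does not propagate quasi-everywhere vanishing from a set to its closure: $u^*$ is only continuous off an open set $G$ of small capacity, and near a point of $\widetilde\Omega\setminus\Omega$ the dense set $\R^N\setminus\widetilde\Omega$ may be entirely contained in $G$ --- this happens precisely when $\R^N\setminus\widetilde\Omega$ is \emph{thin}, in the potential-theoretic sense, at that point --- so nothing can be concluded there. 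Indeed the very scenario you flag defeats the argument: take $\Omega$ a ball and $\Omega_n=\Omega\cup U_n$, where the $U_n$ are shrinking open neighbourhoods of a compact ``champagne-bubble'' set $K$ (empty interior, positive measure, disjoint from $\overline\Omega$, with $\R^N\setminus K$ thin at a non-polar set of points of $K$). Then $\Omega=\interior\bigcap_n\Omega_n$, yet there exist nonzero $u\in H^1(\R^N)$ vanishing a.e.\ on the dense open set $\R^N\setminus K$ (the set $\{u^*\neq0\}$ is a non-polar finely open subset of $K$), and such a $u$ belongs to every $H^1_0(\Omega_n)$ but not to $H^1_0(\Omega)$. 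So ``$u^*=0$ q.e.\ on a dense subset of a closed set'' simply does not imply ``$u^*=0$ q.e.\ on that closed set'', and density plus quasi-continuity cannot close the proof; what is needed is a capacitary/fine-topology condition on $\widetilde\Omega\setminus\Omega$ (non-thinness of $\R^N\setminus\widetilde\Omega$ at quasi-every point where vanishing is required, so that \emph{fine} continuity forces $u^*=0$), i.e.\ exactly the stability-type hypothesis that the theory of Daners is built around.

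For comparison, the paper does not argue directly at all: its proof is a one-line citation of Proposition~7.4 of Daners (2003), together with the observation that the stability condition required there holds for Lipschitz $\Omega$. Your reduction of Mosco convergence to the inclusion $\{u:\,u^*=0 \text{ q.e.\ on }\R^N\setminus\widetilde\Omega\}\subset H^1_0(\Omega)$ is the right frame and matches what that citation is doing work for; but the decisive step is precisely the one you delegated to ``quasi-continuity'', and as a general principle it is false. Any self-contained repair has to bring in the stability/thinness analysis (or simply quote the result of Daners, as the paper does), not merely the topological hypothesis $\Omega=\interior\bigcap_n\Omega_n$.
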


\begin{proof}
This is Proposition~7.4 of \cite{daners:03}, noting that the stability condition there certainly holds if $\Omega$ is Lipschitz.
\end{proof}

\begin{proposition}
\label{prop:3:in}
Suppose $\Omega_n, \Omega \subset \R^N$ are open, $\Omega_n \subset \Omega_{n+1} \subset \Omega$ for all $n \geq 1$, and $\capacity(\Omega \setminus \bigcup_n \Omega_n) = 0$. Then $\Omega_n \to \Omega$.
\end{proposition}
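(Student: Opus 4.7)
For this proposition we have to verify the two Mosco conditions for the increasing sequence $\Omega_n \subset \Omega_{n+1} \subset \Omega$.

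Condition (i) is the easy direction and does not even require the capacity hypothesis. Since $\Omega_n \subset \Omega$, any $u_n \in H^1_0(\Omega_n)$, viewed in $H^1(\R^N)$ via extension by zero, already lies in the closed subspace $H^1_0(\Omega)$ of $H^1(\R^N)$. Closed subspaces are weakly closed, so any weak limit point of $(u_n)$ in $H^1(\R^N)$ is in $H^1_0(\Omega)$.

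The real content is condition (ii), and this is where the capacity hypothesis is used. Set $U := \bigcup_n \Omega_n$, which is open. My plan is to first identify $H^1_0(U)$ with $H^1_0(\Omega)$ and then approximate elements of $H^1_0(U)$ by elements supported compactly in some $\Omega_n$. Concretely, I would invoke the classical characterisation (found in the references cited just before the proposition, e.g.\ \cite{heinonen:93})
\begin{displaymath}
 H^1_0(V) = \{ u \in H^1(\R^N) : \tilde{u} = 0 \text{ quasi-everywhere on } V^c \}
\end{displaymath}
for any open $V \subset \R^N$, where $\tilde u$ denotes the quasi-continuous representative. Since $U \subset \Omega$, we have $U^c = \Omega^c \cup (\Omega \setminus U)$, and as the extra piece $\Omega \setminus U$ has capacity zero by hypothesis, ``quasi-everywhere on $\Omega^c$'' and ``quasi-everywhere on $U^c$'' coincide. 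Hence $H^1_0(U) = H^1_0(\Omega)$ as subspaces of $H^1(\R^N)$.

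It then suffices to show that any $v \in H^1_0(U)$ can be approximated in $H^1(\R^N)$ by functions in $\bigcup_n H^1_0(\Omega_n)$. Given any $v \in H^1_0(U)$, choose $\varphi^{(m)} \in C_c^\infty(U)$ with $\varphi^{(m)} \to v$ in $H^1(\R^N)$. For each $m$, the compact set $\supp \varphi^{(m)}$ is covered by the increasing open cover $\{\Omega_n\}$, so there exists $n_m$ with $\supp \varphi^{(m)} \subset \Omega_{n_m}$, and hence $\varphi^{(m)} \in H^1_0(\Omega_n)$ for every $n \geq n_m$. Choosing $n_m$ strictly increasing and defining $v_n := \varphi^{(m)}$ for $n_m \leq n < n_{m+1}$ (and $v_n := 0$ for $n < n_1$) produces the required diagonal sequence $v_n \in H^1_0(\Omega_n)$ with $v_n \to v$ in $H^1(\R^N)$.

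The only nontrivial ingredient is the quasi-continuity characterisation of $H^1_0$; once that is invoked, everything else reduces to a compactness-plus-diagonal argument against a $C_c^\infty$-dense subset. I expect this capacity/quasi-continuity step to be the only real obstacle, and it is standard enough that I would simply cite it rather than reprove it.
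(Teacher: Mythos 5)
Your proof is correct, and it takes a genuinely different route from the paper's. The paper treats Theorem~7.5 of \cite{daners:03} as a black box, and the only thing verified by hand there is that $\capacity(\Omega\setminus\Omega_n)\to 0$; this is deduced from $\capacity(\Omega\setminus\bigcup_n\Omega_n)=0$ by appealing to continuity properties of the Sobolev capacity as a Choquet capacity (citing \cite{heinonen:93}, with \cite{rauch:75} offered as an alternative route to resolvent convergence). You instead verify the two Mosco conditions directly. Condition (i) you handle correctly by noting that $H^1_0(\Omega_n)\subset H^1_0(\Omega)$ and that the latter is a weakly closed subspace of $H^1(\R^N)$. For condition (ii) your key step is the identification $H^1_0(U)=H^1_0(\Omega)$ with $U:=\bigcup_n\Omega_n$, obtained from the quasi-continuity characterisation of $H^1_0$ together with the hypothesis that $\Omega\setminus U$ has capacity zero; the rest is the standard density-of-$C_c^\infty(U)$ plus compactness-and-diagonal argument, which is fine since the $\Omega_n$ form an increasing open cover of $U$. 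Each proof leans on one nontrivial piece of potential theory---the paper on downward continuity of capacity along the decreasing sequence $\Omega\setminus\Omega_n$, you on the ``$\tilde u=0$ q.e.\ on $V^c$'' characterisation of $H^1_0(V)$ for general open $V$---so neither is strictly more elementary, but yours is self-contained at the level of the Mosco definition and makes the precise role of the capacity-zero hypothesis more transparent.
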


\begin{proof}
This follows directly from Theorem~7.5 of \cite{daners:03}; conditions (2) and (3) there are trivial since $\Omega_n \subset \Omega$. To obtain (1) it suffices to show that $\capacity(\Omega \setminus \bigcup_n \Omega_n) = 0$ implies $\capacity (\Omega \setminus \Omega_n) \to 0$ as $n \to \infty$. But since the $\Omega_n$ are nested, this follows from the basic properties of ($H^1$-)capacity as a Choquet capacity; see for example \cite{heinonen:93}, Theorem~2.37 (cf.~also p.~32 and Theorem~2.2(iv) there). Alternatively, we could refer to Theorem~2.3 of \cite{rauch:75} directly for the convergence of the associated resolvent operators in this case.
\end{proof}

\section{Proof of Theorem~\ref{th:main}}
\label{sec:proof}

Corollary~\ref{cor:3:main} reduces the proof of Theorem~\ref{th:main} to finding a sequence of uniformly bounded, analytic domains $\Omega_n \to \Omega_0$, each homeomorphic to a ball, which leave invariant the set $B_1$ containing compactly one of the nodal domains of $\Omega_0$. Since under such conditions the simplicity of $\lambda_2(\Omega_n)$ is automatic for $n$ sufficiently large, we shall make no further reference to this, assuming implicitly that we are always sufficiently far into the tail of our sequence of domains to guarantee that this property holds. Moreover, all our domains will be contained in some fixed ball, say, $B_{R+1}$, so we will likewise make no further reference to the boundedness condition.

Since we make four separate perturbations, we will split the proof up into the four corresponding steps. The first is not strictly necessary, but serves to make the domains smoother, making subsequent perturbations far easier, and can be done with minimal effort. So we start by turning the ``rooms" $B_\varepsilon(z^i) \cap S_1$ into genuine passages of some positive length in the radial direction by making the outer shell $A_{1,R}$ thinner. %(Using different perturbation results, we could shift $A_{1,R}$ out %instead, preserving its volume or thickness.)
In polar coordinates $(r,\theta)\in\R^N$, for $n \geq 1$ we set
\begin{equation}
\label{eq:4:first}
	\begin{aligned}
	\Omega_n &:=B_1 \cup A_{1+\frac{1}{n},R} \cup
	\{(r,\theta):  r\in[1,1+\frac{1}{n}], (1,\theta)\in \Omega_0\cap S_1\} \\
	&=B_1 \cup A_{1+\frac{1}{n},R}\cup \Bigl([1,1+\frac{1}{n}]\times W_0\Bigr),
	\end{aligned}
\end{equation}
where $W_0$ is as in \eqref{eq:2:rooms} and we recall $A_{r,s}$ is given by \eqref{eq:2:annulus}. Then $\Omega_n \subset B_R$ is Lipschitz (in fact, piecewise-$C^\infty$) for each $n$.

\begin{lemma}
\label{lemma:4:pert1}
	There exists $n_0 \geq 1$ such that the domain $\Omega_n$ satisfies \eqref{eq:1} for all $n\geq n_0$, with one nodal 		domain $\Omega_n^- \subsubset B_1$ for all such $n$.
\end{lemma}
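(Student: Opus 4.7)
The plan is to apply Corollary~\ref{cor:3:main} with $\Omega=\Omega_0$, the sequence $\Omega_n$ from \eqref{eq:4:first}, and a ``safety region'' $\Omega'\subsubset B_1$ still containing the Fournais nodal domain $\Omega_0^-$; the convergence $\Omega_n\to\Omega_0$ will be supplied by the inner-approximation Proposition~\ref{prop:3:in}.

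First, I would verify directly from \eqref{eq:4:first} that, for all $n\geq n_0$ large enough that $1+\tfrac{1}{n}<R$, the inclusions $\Omega_n\subset\Omega_{n+1}\subset\Omega_0$ hold. Indeed, the thick-shell parts $A_{1+1/n,R}$ are increasing in $n$; any sliver of the passage $[1,1+\tfrac{1}{n}]\times W_0$ that is lost when $n$ is increased by one lies inside the slightly fatter annulus $A_{1+1/(n+1),R}$; and each passage sits inside $A_{1,R}\cup W_0\subset\Omega_0$. Taking the increasing union over $n\geq n_0$ one finds $\bigcup_{n\geq n_0}\Omega_n = B_1\cup A_{1,R}\cup W_0$, which by \eqref{eq:2:rooms} coincides with $\Omega_0$ as an open set (the balls $B_\varepsilon(z^i)$ contribute nothing beyond what already lies in $B_1\cup A_{1,R}\cup W_0$). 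Hence $\Omega_0\setminus\bigcup_n\Omega_n=\emptyset$ has zero $H^1$-capacity and Proposition~\ref{prop:3:in} delivers $\Omega_n\to\Omega_0$.

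Next, by the Fournais construction recalled around \eqref{eq:2:fournais}, $\Omega_0^-\subsubset B_1$, so I can choose $\delta>0$ with $\Omega_0^-\subset B_{1-\delta}$ and set $\Omega':=B_{1-\delta}$. Then $\Omega'\subsubset B_1\subset\Omega_n$ for every $n\geq n_0$, and likewise $\Omega'\subsubset\Omega_0$, so $\Omega'\subsubset\Omega_0\cap\bigcap_{n\geq n_0}\Omega_n$, while $\Omega_0^-\subsubset\Omega'$ by construction. Since $\lambda_2(\Omega_0)$ is simple (as is implicit in the clean identification of the nodal domains $\Omega_0^\pm$ in \cite{fournais:01}), Theorem~\ref{th:3:conv} gives simplicity of $\lambda_2(\Omega_n)$ for all sufficiently large $n$ together with $L^2$-convergence of suitably normalised second eigenfunctions $\psi_n\to\psi_0$. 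Corollary~\ref{cor:3:main} then applies and yields $\Omega_n^-\subsubset\Omega'\subsubset B_1$ for all $n$ sufficiently large, which is exactly the claim.

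The only mildly delicate step is the identification $\bigcup_n\Omega_n=\Omega_0$ feeding the capacity computation: it hinges on the observation that the radial passages $[1,1+\tfrac{1}{n}]\times W_0$ collapse precisely to the ``doorways'' $W_0\subset S_1$ that already lie inside the Fournais balls $B_\varepsilon(z^i)$, and are therefore already part of $\Omega_0$. Once this is noted, the rest is a direct and routine application of the perturbation machinery of Section~\ref{sec:perturb}.
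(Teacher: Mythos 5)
Your proof matches the paper's argument: both apply Corollary~\ref{cor:3:main} with $\Omega'=B_{1-\delta}$ and establish $\Omega_n\to\Omega_0$ via Proposition~\ref{prop:3:in}, by checking that the $\Omega_n$ are nested increasing subsets of $\Omega_0$ with $\bigcup_n\Omega_n=\Omega_0$, so the exceptional set is empty and trivially of zero capacity. You simply spell out the nesting and the identification $\bigcup_n\Omega_n=B_1\cup A_{1,R}\cup W_0=\Omega_0$ in more detail than the paper, which compresses this to the observation that $\Omega_0\setminus\Omega_n\subset A_{1,1+1/n}$.
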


\begin{proof}
Noting that $\Omega^-\subsubset B_1$ if and only if $\Omega^-\subsubset B_{1-\delta}$ for $\delta=\delta(\Omega_0)>0$ sufficiently small, applying Corollary~\ref{cor:3:main} to $B_{1-\delta} \subsubset \Omega_0 \cap \bigl(\bigcap_n \Omega_n\bigr)$, it suffices to prove $\Omega_n \to \Omega_0$. For this we use Proposition~\ref{prop:3:in}, noting that the $\Omega_n \subset \Omega_0$ are nested and increasing. Moreover, $\Omega_0\setminus \Omega_n \subset A_{1,1+\frac{1}{n}}$, implying $\bigcup_n \Omega_n = \Omega_0$.
\end{proof}

We now pick $n_0 \geq 1$ as in Lemma~\ref{lemma:4:pert1} and declare $\widetilde \Omega:= \Omega_{n_0}$; this will be our new fixed domain to be perturbed. We will next construct a sequence of simply connected (but not contractible) domains $\widetilde \Omega_m \to \widetilde \Omega$. First we consider only what happens on the surface of the sphere $S_1$. The basic idea is to connect the rooms $W_0 = B_\varepsilon(z^i) \cap S_1$ with a finite number of thin passages lying on the surface $S_1$, of $N-1$-dimensional measure of order $m^{-1}$. These passages, whose number and location will be determined purely by the $z^i$, and thus be independent of $m$ as well as the $n$ from the previous perturbation, will make $\widetilde \Omega_m \cap S_1$ (multiply) connected and partition $S_1 \setminus \widetilde \Omega_m$ into a fixed number $J\in\N$ of disjoint pieces, contractible in the sense of Remark~\ref{rem:topology}.

To do this we construct a closed set $G \subset S_1$, such that (i) $\sigma_{N-2}(G)<\infty$, (ii) $z^i \in G$ for all $i$, (iii) $G$ is connected, and (iv) $S_1 \setminus G$ is the finite, disjoint union of $J$ contractible connected components. Any set $G \subset S_1$ with these properties will do; we construct one explicitly as follows. Writing $z^i = (z_1^i, \ldots, z_N^i)$ in Cartesian coordinates for each $i$, we set
\begin{equation}
	\label{eq:4:g}
	G=\Bigl(\{x\in\R^N:x_1=0\} \cup \Bigl(\bigcup_{i=1}^M\{x\in\R^N:x_N=z_N^i\}\Bigr)\Bigr) \cap S_1.
\end{equation}
That is, we intersect $S_1$ with a vertical hyperplane $\{x_1=0\}$ and a collection of at most $M$ horizontal hyperplanes $\{x_N=z_N^i\}$, one through each of the $z^i$. It is easy to see $G$ has the desired properties (i)--(iv); $S_1 \setminus G$ is divided into some number $J \leq 2M+2$ connected components, the precise number depending on the number of points $z^i$ whose $x_N$-values coincide. We denote the $J$ components by $V_k$, $k = 1, \ldots, J$; each of the $V_k$ will look like
\begin{equation}
	\label{eq:4:vk}
	V_k = \{x \in S_1: x_1 < 0 \text{ (or $x_1>0$) and } x_N \in (z_N^i, z_N^j)\}
\end{equation}
for some appropriate $i$ and $j$.

In order to construct a smooth domain, we now specify the choice of $\varepsilon>0$ in $\Omega_{ \{z^i\}, \varepsilon}=\Omega_0$ more carefully, noting that this will depend only on the $z^i$, and is permissible since the result of \cite{fournais:01} still holds if $\varepsilon>0$ is made smaller. We first specify that $B_\varepsilon(z^i) \cap B_\varepsilon(z^j) = \emptyset$ for all $i \neq j$. We also declare that
\begin{displaymath}
	\varepsilon \in (0, \frac{1}{4}\min\bigl\{ \min\{|z_N^i-z_N^j|: i\neq j\}, \min \{|z_1^i|: z_1^i \neq 0\}\bigr\}),
\end{displaymath}
this interval being non-empty. This means that for each $i$, the only horizontal plane intersecting $B_\varepsilon(z^i)$ is $\{x_N = z_N^i\}$, while $B_\varepsilon(z^i)$ intersects $\{x_1=0\}$ if and only if $z_1^i = 0$. Moreover, the intersections are orthogonal.

We fill out $G$ together with the balls $B_\varepsilon(z^i)$ to make the surface on $S_1$ to be used in our perturbing domains. For each $m \geq 1$, we set
\begin{displaymath}
	G_m:=\{x \in S_1:\dist(x,G)<\frac{\varepsilon}{m}\},
\end{displaymath}
where $\dist(\,.\,,\,.\,)$ is the usual Euclidean distance in $\R^N$. Consider now the nested sequence $G_m \cup W_0$, with $W_0$ from \eqref{eq:2:rooms} (and see \eqref{eq:4:first}). Each $G_m \cup W_0$ contains $\widetilde \Omega\cap S_1 = \Omega_0 \cap S_1 = W_0$, each is connected, they are nested with $\bigcap_m (G_m \cup W_0) = G \cup W_0$, $\sigma_{N-1}(G_m \cup W_0) \leq \infty$ and $\sigma_{N-1}(G_m \setminus \widetilde\Omega) \to 0$. Moreover, by choice of our $\varepsilon>0$, each $G_m \cup W_0$ has boundary inside $S_1$ which is locally the graph of a Lipschitz (piecewise-$C^\infty$) function, since by choice of $\varepsilon>0$ the edges of the $G_m$ considered as open subsets of the manifold $S_1$ intersect the smooth edges of the balls $B_\varepsilon(z^i) \cap S_1$ transversally. Finally, $S_1 \setminus (G_m \cup W_0)$ is still the disjoint union of $J$ contractible connected components, which we will call $V_{1,m},\ldots,V_{J,m}$. For each $k$, the $V_{k,m}$ are nested, with $\bigcup_m V_{k,m} = V_k$ (given by \eqref{eq:4:vk}).

We can now describe the approximating domains $\widetilde \Omega_m$. To do so more conveniently, switching back to polar coordinates $x=(r,\theta) \in \R^N$ and writing $\theta \in G_m \subset S_1$ interchangeably with $(1,\theta) \in G_m$, we set
\begin{displaymath}
	\begin{split}
	\widetilde \Omega_m &:= \widetilde\Omega\cup\{(r,\theta)\in\R^N: r\in [1,1+\frac{1}{n_0}],\theta\in G_m\}\\
	&= B_1 \cup A_{1+\frac{1}{n_0},R} \cup \Bigl([1,1+\frac{1}{n_0}] \times (G_m \cup W_0)\Bigr),
	\end{split}
\end{displaymath}
where $n_0$ is the integer such that $\widetilde\Omega=\Omega_{n_0}$. In words, to make $\widetilde\Omega_m$ we start with $\widetilde\Omega$, and add to it a copy of $G_m$ at every radial level $r \in [1,1+\frac{1}{n_0}]$.

In particular, $\widetilde \Omega_m$ is Lipschitz (and piecewise-$C^\infty$) since $\widetilde\Omega$ and $[1,1+\frac{1}{n_0}] \times G_m$ are, and they intersect each other non-tangentially; and $\R^N \setminus \widetilde \Omega_m$ consists of the unbounded outer set $\{x\in \R^N:|x| \geq R\}$ together with $J$ contractible sets (``holes" in $\widetilde \Omega_m$) of the form
\begin{equation}
	\label{eq:4:holes}
	U_{k,m} = [1,1+\frac{1}{n_0}] \times V_{k,m},
\end{equation}
$k=1,\ldots,J$ (again in polar coordinates) , where $V_{k,m}$ is one of the contractible pieces of $\partial \widetilde \Omega_m \cap S_1$ as above. Topologically, this means $\widetilde \Omega_m$ is homeomorphic to a large ball with $J$ disjoint closed small balls removed from its interior. As $N \geq 3$, $\widetilde \Omega_m$ is simply connected (see Remark~\ref{rem:topology}), although we will only use this indirectly.

\begin{lemma}
\label{lemma:4:pert2}
	There exists $m_0 \geq 1$ such that $\widetilde\Omega_m$ satisfies \eqref{eq:1} with one nodal domain ${\widetilde 			\Omega}_m^- \subsubset B_1$, for all $m \geq m_0$.
\end{lemma}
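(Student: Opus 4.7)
My plan is to follow the structure of the proof of Lemma~\ref{lemma:4:pert1}. The strategy is to invoke Corollary~\ref{cor:3:main} with $\Omega = \widetilde\Omega$, $\Omega_m = \widetilde\Omega_m$, and $\Omega' = B_{1-\delta}$ for a suitably small $\delta > 0$. Since Lemma~\ref{lemma:4:pert1} gives $\widetilde\Omega^- \subsubset B_1$, we may choose $\delta = \delta(\widetilde\Omega) > 0$ so that $\widetilde\Omega^- \subsubset B_{1-\delta}$; and because $B_1 \subset \widetilde\Omega \subset \widetilde\Omega_m$ for every $m$, the required compact containment $B_{1-\delta} \subsubset \widetilde\Omega \cap \bigcap_m \widetilde\Omega_m$ is automatic. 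The whole content of the lemma thus reduces to establishing the Mosco convergence $\widetilde\Omega_m \to \widetilde\Omega$.

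Since we are approximating $\widetilde\Omega$ from the \emph{outside} (by construction $G_m \supset G_{m+1}$ forces $\widetilde\Omega_m \supset \widetilde\Omega_{m+1} \supset \widetilde\Omega$), the natural tool is Proposition~\ref{prop:3:out}. Its hypotheses require (a) that the $\widetilde\Omega_m$ be nested and decreasing with $\widetilde\Omega = \interior \bigcap_m \widetilde\Omega_m$, and (b) that the limit $\widetilde\Omega$ be Lipschitz. Monotonicity is clear from the definition, and (b) was already arranged in the construction: $\widetilde\Omega = \Omega_{n_0}$ is piecewise-$C^\infty$.

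The only point requiring a small argument is that $\interior \bigcap_m \widetilde\Omega_m = \widetilde\Omega$. Since $G$ is closed and $G_m = \{x \in S_1 : \dist(x,G) < \varepsilon/m\}$, we have $\bigcap_m G_m = G$, and hence
\[
\bigcap_m \widetilde\Omega_m \;=\; \widetilde\Omega \;\cup\; \Bigl([1,1+\tfrac{1}{n_0}]\times G\Bigr).
\]
The extra set $[1,1+\frac{1}{n_0}]\times G$ sits inside a finite union of piecewise-smooth $(N-1)$-dimensional hypersurfaces of $\R^N$ (obtained by sweeping the hyperplane sections of $S_1$ in \eqref{eq:4:g} out radially), so it is nowhere dense. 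Adjoining a nowhere dense set to the open set $\widetilde\Omega$ does not enlarge its interior, which gives the desired identity.

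With this verified, Proposition~\ref{prop:3:out} yields $\widetilde\Omega_m \to \widetilde\Omega$, and Corollary~\ref{cor:3:main} closes the argument. I do not anticipate any real obstacle at this step: unlike the subsequent perturbations (drilling the ``fireman's poles'' and the final analytic approximation), this is essentially a direct application of the framework of Section~\ref{sec:perturb}, made painless by the fact that both $\widetilde\Omega$ and each $\widetilde\Omega_m$ are already Lipschitz, so that no capacitary estimates are needed.
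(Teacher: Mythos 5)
Your overall strategy is the same as the paper's: reduce the lemma to Mosco convergence $\widetilde\Omega_m \to \widetilde\Omega$ via Corollary~\ref{cor:3:main}, observe that the $\widetilde\Omega_m$ decrease to $\widetilde\Omega$ and apply Proposition~\ref{prop:3:out}, and verify the remaining hypothesis by identifying $\bigcap_m \widetilde\Omega_m = \widetilde\Omega \cup T$ with $T = [1,1+\tfrac{1}{n_0}]\times G$.

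There is, however, a genuine gap in the last step. The principle you invoke --- \emph{adjoining a nowhere dense set to an open set does not enlarge its interior} --- is false in general. For example, take $U = B_1 \setminus \bigl(\{x_1=0\}\cap \overline{B_{1/2}}\bigr)$ and $T=\{x_1=0\}\cap\overline{B_{1/2}}$ in $\R^N$: then $T$ is closed, nowhere dense, and contained in a hyperplane, yet $\interior(U\cup T)=B_1\supsetneq U$. The danger is precisely that a thin set attached to $\partial\widetilde\Omega$ could ``heal'' a slit-type defect there. What rules this out, and what the paper actually uses, is that $\widetilde\Omega$ is Lipschitz: every boundary point has a uniform exterior cone, so for each $x\in\partial\widetilde\Omega$ and small $r>0$ the set $B_r(x)\setminus\overline{\widetilde\Omega}$ has positive Lebesgue measure, whereas $T$ has Lebesgue measure zero; thus $B_r(x)\not\subset\widetilde\Omega\cup T$, and combined with $\interior T=\emptyset$ away from $\overline{\widetilde\Omega}$ this gives $\interior(\widetilde\Omega\cup T)=\widetilde\Omega$. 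You already invoke the Lipschitz property of $\widetilde\Omega$ as a hypothesis of Proposition~\ref{prop:3:out}; it is also the load-bearing ingredient here, and your write-up should appeal to it (or to the resulting exterior cone / measure argument) rather than to the false general topological claim.
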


\begin{proof}
Using the same reasoning as in Lemma~\ref{lemma:4:pert1}, it suffices to show that $\widetilde \Omega_m \to \widetilde \Omega$. This time we shall use Proposition~\ref{prop:3:out}, noting that $\widetilde \Omega \subset \widetilde \Omega_m \subset \widetilde \Omega_{m-1}$. As $\widetilde \Omega$ is Lipschitz, we only have to verify that $\widetilde \Omega = \interior(\bigcap_{m \geq 1}\widetilde \Omega_m)$. Since $\bigcap_{m \geq 1} G_m = G$, $\bigcap_{m \geq 1}\widetilde \Omega_m$ will consist of $\widetilde\Omega$ together with a thin set $T$ (a collection of ``sheets", to mix our metaphors), consisting of a copy of $G$ at each radial level, that is,
\begin{displaymath}
	\bigcap_{m\geq 1}\widetilde\Omega_m = \widetilde\Omega \cup T
	:=\widetilde\Omega \cup \Bigl([1,1+\frac{1}{n_0}] \times G\Bigr).
\end{displaymath}
Then $\sigma_{N-1}(T)<\infty$ as $\sigma_{N-2}(G)<\infty$. Moreover, $T$ intersects $\partial\widetilde\Omega$ only on a set of finite $N-2$-dimensional measure, which we may consider as consisting of three parts: the set $G \setminus W_0$ in $S_1$, a copy of this in $S_{1+\frac{1}{n_0}}=  \{|x|=1+\frac{1}{n_0}\}$, and the set where $T$ intersects the boundary of the passages $[1,1+\frac{1}{n_0}] \times W_0$ (cf.~\eqref{eq:2:rooms} and \eqref{eq:4:first}). In particular, as $\widetilde\Omega$ is Lipschitz, $\interior(\widetilde \Omega \cup T) = \widetilde \Omega$, and we may apply Proposition~\ref{prop:3:out} to obtain $\widetilde\Omega_m\to\widetilde\Omega$.
\end{proof}

We now fix $m_0 \geq 1$ as in Lemma~\ref{lemma:4:pert2} and take $\widehat \Omega:=\widetilde \Omega_{m_0}$ as our new fixed domain. We can now construct domains $\widehat \Omega_l \to \widehat \Omega$ homeomorphic to a ball. For each of the $J$ holes $U_k:= U_{k,m_0}$ given by \eqref{eq:4:holes}, we pick a point $(1,\theta_k) \in V_{k,m_0} = \partial\widehat\Omega \cap S_1$, such that there exists $\eta>0$ such that $\dist((1,\theta),\partial V_{k,m_0})\geq\eta$ for all $k$ (that is, $\theta_k$ is away from the edge of $V_{k,m_0}$, considered as a set in the manifold $S_1$). Since there are finitely many $V_{k,m_0}$, each with non-empty interior, we can certainly do this. Also note that   $(r,\theta_k) \in U_k$ for all $r \in [1, 1+\frac{1}{n_0}]$.  For each $k$ we consider the one-dimensional line
\begin{displaymath}
	L_k:=\{(r,\theta_k) \in \R^N: r\in (1+\frac{1}{n_0},R)\} \subset \widehat\Omega
\end{displaymath}
and we construct $\widehat \Omega_l$ by removing $J$ ``fireman's poles" (cf.~the introduction to \cite{rauch:75}) of width $l^{-1}$ centred about the $L_k$ from $\widehat\Omega$:
\begin{displaymath}
	\widehat\Omega_l:=\widehat\Omega\setminus\{x\in\widehat\Omega:\dist(x,L_k)\leq\frac{1}{l},
	\text{ for some $k=1,\ldots,j$}\}.
\end{displaymath}
$l \geq 1$. We assume without loss of generality that $l \geq \eta^{-1}$, so that these fireman's poles are thin compared with the holes $U_k$, and meet the $U_k$ approximately orthogonally. Since in addition these holes are cylindrical, the $\widehat \Omega_l$ are still Lipschitz and piecewise-$C^\infty$. Moreover, the $\widehat \Omega_l$ are now contractible, and indeed homeomorphic to balls in $\R^N$; the same is true of $B_{R+1} \setminus \widehat\Omega_l$, and $\R^N \setminus \widehat\Omega_l$ is simply connected.

\begin{lemma}
\label{lemma:4:pert3}
	There exists $l_0 \geq 1$ such that $\widehat\Omega_l$ satisfies \eqref{eq:1} with one nodal domain $\widehat 			\Omega_l^- \subsubset B_1$, for all $l \geq l_0$.
\end{lemma}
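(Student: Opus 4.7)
The plan is to follow the same pattern as in Lemma~\ref{lemma:4:pert1} and Lemma~\ref{lemma:4:pert2}: establish the Mosco convergence $\widehat\Omega_l \to \widehat\Omega$ and then invoke Corollary~\ref{cor:3:main} with $\Omega' = B_{1-\delta}$ for some small $\delta > 0$. This time the approximation comes from inside, since each $\widehat\Omega_l$ is obtained by \emph{removing} closed tubes of width $1/l$ about the segments $L_k$ from $\widehat\Omega$, and these tubes shrink monotonically. Thus $\widehat\Omega_l \subset \widehat\Omega_{l+1} \subset \widehat\Omega$, and the natural tool is Proposition~\ref{prop:3:in}.

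A direct computation gives $\widehat\Omega \setminus \bigcup_{l \geq 1} \widehat\Omega_l = \bigcup_{k=1}^J L_k$, so verifying the hypothesis of Proposition~\ref{prop:3:in} reduces to showing that this finite union of compact line segments has zero $H^1$-capacity in $\R^N$. This is the main step and the only really non-routine part of the argument: it is precisely where the standing hypothesis $N \geq 3$ enters, since only then do the $L_k$ have codimension at least two. For $N \geq 4$, capacity vanishes trivially via a radial transverse cut-off. In the borderline case $N = 3$, one uses the classical logarithmic test function $\varphi_\varepsilon = \min(1,\max(0,\log(\delta/r)/\log(\delta/\varepsilon)))$, where $r$ denotes the distance to $L_k$, multiplied by a smooth cut-off along the axis direction; a routine polar-coordinate computation gives $\|\varphi_\varepsilon\|_{H^1}^2 = O(1/\log(\delta/\varepsilon)) \to 0$. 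Alternatively, one may simply cite standard references such as \cite{heinonen:93} or the discussion in the introduction to \cite{rauch:75}.

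With $\widehat\Omega_l \to \widehat\Omega$ in hand from Proposition~\ref{prop:3:in}, the closing step is immediate. The fireman's poles lie entirely in the outer shell $A_{1+\frac{1}{n_0},R}$, so $B_{1-\delta} \subsubset \widehat\Omega \cap \bigcap_{l \geq 1}\widehat\Omega_l$ for every small $\delta>0$. Choosing $\delta>0$ small enough that Lemma~\ref{lemma:4:pert2} gives $\widehat\Omega^- \subsubset B_{1-\delta}$, Corollary~\ref{cor:3:main} yields $\widehat\Omega_l^- \subsubset B_{1-\delta} \subsubset B_1$ for all $l$ sufficiently large, which is the required conclusion.
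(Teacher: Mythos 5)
Your proposal is correct and follows essentially the same route as the paper: inner Mosco approximation via Proposition~\ref{prop:3:in}, observing that $\widehat\Omega\setminus\bigcup_l\widehat\Omega_l$ is the finite union of line segments $L_k$, which have zero $H^1$-capacity in $\R^N$ for $N\geq 3$, followed by Corollary~\ref{cor:3:main}. The only difference is that you sketch the logarithmic test-function computation for the $N=3$ capacity estimate, whereas the paper simply cites \cite{rauch:75}, Section~2, Example~3.
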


\begin{proof}
As usual, we only have to show $\widehat\Omega_l \to \widehat \Omega$, which we do using Proposition~\ref{prop:3:in}. In this case, it is immediate that the $\widehat \Omega_l$ are nested subsets of $\widehat \Omega$, and $\widehat \Omega \setminus \bigcup_l \widehat\Omega_l$ is just the union of the line segments $L_k$, which as a one-dimensional object has zero capacity in $\R^N$ for $N \geq 3$ (see, e.g., \cite{rauch:75}, Section~2, Example~3).
\end{proof}

We now choose such an $l_0 \geq 1$ and set $\Omega^*:=\widehat\Omega_{l_0}$. Then $\Omega^*$ has every property claimed in Theorem~\ref{th:main}, except that its boundary is only Lipschitz and piecewise-$C^\infty$ rather than analytic. To construct our $\Omega$ from a final sequence of approximations, we first note that there exists $\delta_0>0$ such that the open set
\begin{displaymath}
	\Omega_\delta^* := \{x \in \R^N: \dist(x,\Omega^*)<\delta\}
\end{displaymath}
is still homeomorphic to a ball for all $\delta \in (0,\delta_0)$. Choose any sequence $\delta_p \to 0$ monotonically, with $\delta_1<\delta_0$. We shall find inductively a nested sequence of analytic domains $\Omega_p^*$, with $\Omega \subset \Omega_p^* \subset \Omega_{p-1}^* \cap \Omega_{\delta_p}^*$.

We start by using \cite{edmunds:87}, Theorem~V.4.20 to obtain the existence of a set $V_1$ with analytic boundary and $\Omega^* \subsubset V_1 \subsubset \Omega_{\delta_1}^*$. In case $V_1$ is not connected and/or not homeomorphic to a ball, we first choose the unique connected component $V_1' \subset V_1$ such that $\Omega^* \subsubset V_1'$, and then obtain $\Omega_1^*$ by ``filling in" any holes in $V_1'$. Noting that since $V_1'$ is bounded analytic, it will be homeomorphic to a large ball with a finite number, possibly zero, of connected sets removed from its interior, we let $\Omega_1^*$ be the unique contractible set such that $V_1' \subset \Omega_1^*$ and $\partial \Omega_1^* \subset \partial V_1'$. Thus $\Omega_1^*$ is an analytic domain homeomorphic to a ball with $\Omega^* \subset \Omega_1^* \subset \Omega_{\delta_1}^*$.

Proceeding inductively, given $\Omega_{p-1}^*$, we obtain via \cite{edmunds:87} an analytic set $V_p$ such that $\Omega^* \subsubset V_p \subsubset \Omega_{p-1}^* \cap \Omega_{\delta_p}^*$. We then repeat the process described above to extract an analytic $\Omega_p^* \subset V_p$ homeomorphic to a ball. 

\begin{lemma}
\label{lemma:4:pert4}
	There exists $p_0 \geq 1$ such that $\Omega_p^*$ satisfies \eqref{eq:1} with  one nodal domain $(\Omega_p^*)^- 			\subsubset B_1 \subsubset \Omega_p^*$ for $p \geq p_0$.
\end{lemma}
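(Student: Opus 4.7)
The plan is to follow the template of the previous three lemmas: reduce the claim to showing Mosco convergence $\Omega_p^* \to \Omega^*$, then apply Corollary~\ref{cor:3:main}. By construction the $\Omega_p^*$ form a decreasing sequence of open sets containing $\Omega^*$, with $\Omega^* \subset \Omega_p^* \subset \Omega_{p-1}^* \cap \Omega_{\delta_p}^*$, so the appropriate convergence tool is Proposition~\ref{prop:3:out}. Since $\Omega^*$ is Lipschitz (in fact piecewise-$C^\infty$), the only hypothesis that needs checking is the identity $\Omega^* = \interior \bigcap_{p \geq 1}\Omega_p^*$.

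To verify this, I would use $\delta_p \searrow 0$ to sandwich
\begin{displaymath}
	\Omega^* \subset \bigcap_{p \geq 1}\Omega_p^* \subset \bigcap_{p \geq 1}\Omega_{\delta_p}^* = \overline{\Omega^*};
\end{displaymath}
taking interiors and using that $\interior \overline{\Omega^*} = \Omega^*$ for the Lipschitz open set $\Omega^*$ yields the required identity. Proposition~\ref{prop:3:out} then gives $\Omega_p^* \to \Omega^*$, so Corollary~\ref{cor:3:main} is available.

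To apply the corollary, choose $\delta > 0$ small enough that $(\Omega^*)^- \subsubset B_{1-\delta}$, which is possible by Lemma~\ref{lemma:4:pert3} since $(\Omega^*)^- \subsubset B_1$. Then $\overline{B_{1-\delta}} \subset B_1 \subset \Omega^* \subset \Omega_p^*$ for every $p$, so $B_{1-\delta} \subsubset \Omega^* \cap \bigcap_p \Omega_p^*$, and Corollary~\ref{cor:3:main} produces $p_0$ with $(\Omega_p^*)^- \subsubset B_{1-\delta} \subsubset B_1$ for all $p \geq p_0$. For the remaining containment $B_1 \subsubset \Omega_p^*$, the construction of $\Omega_p^*$ gives $\overline{\Omega^*} \subset V_p' \subset \Omega_p^*$: indeed $\overline{\Omega^*}$ is connected and satisfies $\overline{\Omega^*} \subset V_p$ (by $\Omega^* \subsubset V_p$), so it lies entirely in the unique component $V_p'$, and hence $\overline{B_1} \subset \overline{\Omega^*} \subset \Omega_p^*$. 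The only mildly non-trivial step in this scheme is the equality $\interior \overline{\Omega^*} = \Omega^*$, which is a standard property of Lipschitz open sets; everything else is a routine application of the machinery assembled in Section~\ref{sec:perturb}.
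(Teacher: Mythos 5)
Your argument follows the same route as the paper: reduce to Mosco convergence via Proposition~\ref{prop:3:out} using the nested decreasing sequence, note $\bigcap_p \Omega_p^* \subset \overline{\Omega^*}$ from $\delta_p \to 0$, and invoke the Lipschitz regularity of $\Omega^*$ to pass from the closure back to the interior. You spell out the step $\interior \overline{\Omega^*} = \Omega^*$ and the containment $B_1 \subsubset \Omega_p^*$ a bit more explicitly than the paper does, but the substance is identical.
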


\begin{proof}
Again, we show $\Omega_p^* \to \Omega^*$. By construction, $\Omega_p^* \supset \Omega_{p+1}^* \supset \Omega^*$, and since $\Omega_p^* \subset \{x \in \R^N: \dist(x,\Omega^*) < \delta_p\}$, $\delta_p \to 0$, we have $\bigcap_{p\geq 1}\Omega_p^* = \overline{\Omega^*}$. As $\Omega^*$ is Lipschitz, Proposition~\ref{prop:3:out} implies $\Omega_p^* \to \Omega^*$.
\end{proof}

Choosing $\Omega$ to be as in Lemma~\ref{lemma:4:pert4} completes the proof of Theorem~\ref{th:main}.

\begin{remark}
\label{rem:4:others}
It is possible to construct domains with various other properties for which \eqref{eq:1} holds. For example, we could prove that, given any type of connectivity condition, we could find a smooth domain satisfying \eqref{eq:1} and that condition (e.g., doubly or multiply connected, or generally having any fundamental group), by modifying our $\Omega$ on a sufficiently small set away from $B_1$. We could also construct various unbounded domains satisfying \eqref{eq:1} using the same ideas but different perturbation results. We do not go into details.
\end{remark}

\bibliographystyle{amsplain}

\providecommand{\bysame}{\leavevmode\hbox to3em{\hrulefill}\thinspace}
%\providecommand{\MR}{\relax\ifhmode\unskip\space\fi MR }
%% \MRhref is called by the amsart/book/proc definition of \MR.
%\providecommand{\MRhref}[2]{%
%  \href{http://www.ams.org/mathscinet-getitem?mr=#1}{#2}
%}
\providecommand{\href}[2]{#2}

\end{document}